\newlength{\defbaselineskip}
\newcommand{\setlinespacing}[1]%
           {\setlength{\baselineskip}{#1 \defbaselineskip}}
\theoremstyle{plain}
\newtheorem{thm}{Theorem}[section]
\newtheorem{prop}[thm]{Proposition}
\theoremstyle{definition}
\newtheorem{rem}{Remark}[section]
\makeatletter\@addtoreset{equation}{section} \makeatother
\begin{document}
\title{The Maximum Principle for Global Solutions of Stochastic Stackelberg Differential Games\thanks{The first author is supported by WCU (World Class University) program through the National Research Foundation of Korea funded by the Ministry of Education, Science and Technology (R31 - 20007) and by the Research Grants Council of HKSAR (PolyU 5001/11P). The second author is supported by NNSF of China (Grant No.11101140).}}
\author{Alain Bensoussan\footnote{Naveen Jindal School of Management, The University of Texas at Dallas, Richardson, TX, USA.} \footnote{Graduate School of Business, The Hong Kong Polytechnic University, Hong Kong, and Graduate Department of Financial Engineering, Ajou University, Suwon, South Korea.}\ , Shaokuan Chen$^\dag$ and Suresh P. Sethi$^\dag$}
\maketitle  \noindent \textbf{Abstract:}
This paper obtains the maximum principle for both stochastic (global) open-loop and stochastic (global) closed-loop Stackelberg differential games. For the closed-loop case, we use the theory of controlled forward-backward stochastic differential equations to derive the maximum principle for the leader's optimal strategy. In the special case of the open-loop linear quadratic Stackelberg game, we consider the follower's Hamiltonian system as the leader's state equation, derive the related stochastic Riccati equation, and show the existence and uniqueness of the solution to the Riccati equation under appropriate assumptions. However, for the closed-loop linear quadratic Stackelberg game, we can write the related Riccati equation consisting of forward-backward stochastic differential equations, while leaving the existence of its solution as an open problem.\\
\textbf{Keywords:} Stackelberg differential game, maximum principle, forward-backward stochastic differential equation, Riccati equation.

\section{Introduction}\label{sec1}
In 1934, H. von Stackelberg introduced a concept of a hierarchical
solution for markets where some firms have power of domination over
others \cite{Stackelberg34}. This solution concept is now known as
the Stackelberg equilibrium or the Stackelberg solution which,  in
the context of  two-person nonzero-sum static games, involves
players with asymmetric roles, one leading (called the leader) and
the other following (called the follower). A Stackelberg game
proceeds with the leader announcing his policy prior to the start of
the game. With the knowledge of the leader's strategy, the follower
chooses a policy so as to optimize his own performance index. The
leader, anticipating the follower's optimal response, picks the
policy which optimizes his performance index on the rational
reaction curve of the follower, which together with the
corresponding policy of the follower is known as the Stackelberg
solution.

In dynamic Stackelberg games, it becomes important to know the
player's information sets at any given time. In this paper, we will
consider two different information structures: i) open-loop for both
players and ii) closed-loop perfect state (CLPS) for both players.
Moreover, we will only treat global solution where the leader
announces his entire strategy at the start of the game and the
follower reacts to the entire strategy. The solutions of games with
the first information structure will be termed (global) open-loop
Stackelberg solutions, whereas the solutions of the games with the
second information structure will be termed (global) closed-loop
Stackelberg solutions. It is known that both these solutions suffer
from time inconsistency, which results from the functional
dependence of the follower's optimal response strategy on the
leader's entire strategy on the duration of the game.

In addition to these concepts, there is another concept of feedback Stackelberg
solution, where the Stackelberg property is retained at every stage
(in the discrete-time setting) with the leader having only stagewise
advantage over the follower. Since the continuous-time problem can
be viewed as the number of stages becomes unbounded in any finite
interval, stagewise advantage of the leader over the follower turns
into instantaneous advantage. A good aspect of this solution is that
it is time consistent. Readers interested in the theory and applications of this solution can refer to
\cite{BasarHaurie84}, \cite{Bensoussanetal12}, \cite{Dockner et al00}, \cite{HePrasadSethi09},
\cite{Heetal07} and \cite{KoganTapiero07}.

In an open-loop or closed-loop Stackelberg differential game, the
follower aims at minimizing his cost functional in accordance with
the leader's strategy on the whole duration of the game.
Anticipating the follower's optimal response depending on his entire
strategy, the leader chooses an optimal one in advance to
minimize his own cost functional, based on the Hamiltonian system
satisfied by the follower's optimal response.
The difference between the two kinds of games is whether the
information sets of the players involve the history of the state.
The introduction of the history of the state in the closed-loop
Stackelberg game, even in the deterministic case, makes it difficult
to tackle, as the follower may not obtain his optimal response if the
leader's announced strategy incorporates the memory of the state.
Two approaches to circumvent this difficulty are introduced: the team approach and
the maximum principle. For the former, one can refer to \cite{Basar79a}, \cite{BasarSelbuz79b} in the discrete-time setting and \cite{Papavassi79}, \cite{PapavassiCruz79b},
\cite{PapavassiCruz80} and \cite{BarsarOlsder80} in the continuous-time setting. For the latter, one can refer to \cite{PapavassiCruz79} for nonclassical control 
problems arising from Stackelberg games. The idea of team approach is as follows: the leader first minimizes his cost functional over the controls of
both the leader and the follower, yielding a lower bound on his
cost functional and the team strategies for both players. Then the
leader makes an effort to find a closed-loop strategy such that the
follower's optimal response and the state trajectory will coincide
with his team strategy and the team optimal trajectory, which leads
to the lower bound on the leader's cost functional. The maximum principle approach
restricts the leader's strategy to depend only on the initial state
and the current state (memoryless perfect state information
structure) and a nonclassical control problem faced by the leader is
solved.
It is worth noting that in this case, the follower's adjoint equation
involves the derivative of the leader's strategy with respect to the
state. Therefore, after incorporating the follower's adjoint
variable as an augmented state, the leader encounters a nonclassical
control problem with the feature that both the control and its
derivative with respect to the state appear in the controlled
forward-backward ordinary differential equation system. The authors provide two approaches to tackle this
problem and give the necessary conditions satisfied by the leader's
optimal strategy. One is to directly apply the variational technique
to the state system with mixed-boundary conditions (the adjoint
equation of the follower with a terminal condition). The other is to
establish an equivalent relationship between such a nonclassical
control problem and a classical control problem, which yields that
the optimal strategy could be found in the space of affine
functions. The phenomenon of time inconsistency is also analyzed by
the authors. We will elaborate on the technical details and generalize
their result to the stochastic setting in section \ref{sec3}.

For the stochastic formulation of Stackelberg games involving white noise terms, Yong \cite{Yong02} studies the open-loop linear quadratic case, with control variables appearing in diffusion term of the state. To give a state feedback representation of the open-loop Stackelberg solution (in a non-anticipating way), the related Riccati equation is derived and sufficient conditions for the existence of its solution with deterministic coefficients are discussed. More recently, {\O}ksendal et al \cite{Oksendaletal11} have considered a general stochastic open-loop Stackelberg differential game, proved a sufficient maximum principle, and applied the theory to continuous-time newsvendor problems.

In this paper, we study stochastic global Stackelberg differential games with open-loop and closed-loop information structures. As we shall see, the problems confronted by the leader in both cases, from the current point of view, are control problems with the state equations being forward-backward stochastic differential equations (FBSDEs). The theories for nonlinear backward stochastic differential equations (BSDEs) and FBSDEs have been extensively studied over the last two decades following the initial work by Pardoux and Peng \cite{PardouxPeng90}. One can refer to, among others, \cite{Maetal94}, \cite{MaYong99}, \cite{PardouxTang99}, \cite{PengWu99}, \cite{Yong10a}, and the references therein, for the development of the theory of FBSDEs and their applications. With the help of the results in optimization problems for controlled FBSDEs (see, e.g., \cite{ShiWu06} and \cite{Yong10b}), we obtain the maximum principle for the leader's optimal strategies in stochastic global Stackelberg games, and discuss linear quadratic problems as well as the corresponding Riccati equations.

This paper is organized as follows. In section 2 we formulate a stochastic Stackelberg game and give three types of concepts of equilibria. In section 3 we present the maximum principle for a stochastic open-loop Stackelberg game. In section 4 we focus on a stochastic closed-loop Stackelberg game and derive a maximum principle for the leader's optimal strategy. As examples, linear quadratic stochastic open-loop and closed-loop Stackelberg games are studied in section 5. For the open-loop linear quadratic case, we show the existence and uniqueness of the solution to the associated stochastic Riccati equation under some assumptions. For the closed-loop case, we simply derive a new Riccati equation consisting of FBSDEs, without investigating the issue of the existence of its solution.
\section{Problem formulation and definition of equilibria}\label{sec2}
Let $(\Omega,\mathcal {F},P)$ be a complete probability space on
which is defined a $d$-dimensional standard Brownian motion
$\{W(t),0\leq t\leq T\}$. $\{\mathcal {F}_t\}_{0\leq t\leq T}$ is
the natural filtration generated by $W$ and augmented by all the
$P$-null sets in $\mathcal {F}$ and $\mathcal {P}$ is the
predictable sub-$\sigma$-field of $\mathcal {B}([0,T])\times\mathcal
{F}$.

We consider a stochastic differential system
\begin{equation}\label{o1}
\left\{
\begin{split}
  dx(t)&=f(t,x(t),u(t),v(t))dt+\sigma(t,x(t))dW(t),\\
  x(0)&=x_0,
\end{split}\right.
\end{equation}
where
$$f:\Omega\times[0,T]\times\mathbb{R}^n\times\mathbb{R}^{m_1}\times\mathbb{R}^{m_2}\rightarrow\mathbb{R}^n,$$
$$\sigma:\Omega\times[0,T]\times\mathbb{R}^n\rightarrow\mathbb{R}^{n\times d},$$
are $\mathcal {P}\times\mathcal {B}(\mathbb{R}^{n+m_1+m_2})/\mathcal
{B}(\mathbb{R}^{n})$ and $\mathcal {P}\times\mathcal
{B}(\mathbb{R}^{n})/\mathcal {B}(\mathbb{R}^{n\times d})$
measurable, respectively, and $(u(\cdot),v(\cdot))$ are the decision
variables of the leader and the follower, respectively. The cost
functionals for the leader and the follower to minimize are
described as follows
\begin{equation*}\label{o2}
\begin{split}
  J_1(u,v)&=E[\int_0^Tg_1(t,x(t),u(t),v(t))dt+G_1(X(T))],\\
  J_2(u,v)&=E[\int_0^Tg_2(t,x(t),u(t),v(t))dt+G_2(x(T))],
\end{split}
\end{equation*}
with $$g_i:\Omega\times[0,T]\times\mathbb{R}^{n}\times U\times
V\rightarrow\mathbb{R},$$
$$G_i:\Omega\times\mathbb{R}^n\rightarrow\mathbb{R},$$ $i=1,2$,
being $\mathcal {P}\times\mathcal {B}(\mathbb{R}^{n})\times\mathcal
{B}(U)\times\mathcal {B}(V)/\mathcal {B}(\mathbb{R})$ and $\mathcal
{F}_T\times\mathcal {B}(\mathbb{R}^n)/\mathcal {B}(\mathbb{R})$
measurable, respectively.

According to the player's information sets at any given time, there
are three types of Stackelberg games: (global) open-loop, (global)
closed-loop, and feedback Stackelberg games.

\textbf{Open-loop games:} In an open-loop Stackelberg game, the leader's
information set at time $t$ is $\{x_0,\mathcal {F}_t\}$.
Therefore, the strategy $u$ announced by the leader is an $\mathcal
{F}_t$-adapted process. The follower aims at minimizing his cost
functional $J_2(u,v)$ in accordance with the leader's strategy $u$
on the whole duration of the game. His optimal response $\Phi(u)$
will be an adapted process such that
$$J_2(u,\Phi(u))\leq J_2(u,v),\ \ \forall\ u,v.$$
The leader, anticipating the follower's optimal response $\Phi$,
picks the policy $u^*$ which optimizes his performance index on the
rational reaction curve of the follower, i.e.,
$$J_1(u^*,\Phi(u^*))\leq J_1(u,\Phi(u)),\ \forall\ u.$$
$(u^*,\Phi(u^*))$ is a Stackelberg solution for an open-loop
game.

\textbf{Closed-loop games:} In a closed-loop Stackelberg game, the
information set for the leader at time $t$ is $\{\mathcal
{F}_t,x_s,s\in[0,t]\}$ (closed-loop perfect state information). The
strategy that the leader adopts now can incorporate the history
information of the state. Since in general it is difficult for the follower to
obtain his optimal response if the leader's announced strategy
incorporates the whole history of the state, we only consider the
closed-loop case under the memoryless perfect state information
pattern, i.e., the information set of the leader at time $t$ is
$\{x_0,x_t,\mathcal {F}_t\}$. For leader's each strategy
$u(t,x_0,x)$, which is now a stochastic field, the follower tries to
find his optimal response $\Psi(u)$ such that
$$J_2(u,\Psi(u))\leq J_2(u,v),\ \forall\ u,v.$$
Taking into account the follower's optimal response, the leader
should choose $u^*$ such that
$$J_1(u^*,\Psi(u^*)\leq J_1(u,\Psi(u)),\ \forall\ u.$$
$(u^*,\Psi(u^*))$ is a Stackelberg solution for a closed-loop
game.

\textbf{Feedback games:} In a feedback Stackelberg game, the
information set for the leader at time $t$ is $\{x_t,\mathcal
{F}_t\}$ (feedback pattern). The significant mechanism difference between
feedback games and the former two types of games is that the
advantage of the leader over the follower in a feedback Stackelberg
game is instantaneous not global, as the differential game could be
viewed as the limit of the discrete-time game as the number of
stages becomes unbounded (see \cite{BasarHaurie84}). Therefore,
corresponding to the leader's instantaneous strategy $u(t,x)$, the
follower will make an instantaneous response of the form $v(t,x,u(t,x))$, which
depends on the current state and the leader's current action. A
feedback solution is a pair of strategies $(u^*,v^*)$ such that
\begin{align*}
  &J_1(u^*,v^*(u^*))\leq J_1(u,v^*(u)),\ \forall\ u,\\
  &J_2(u^*,v^*(u^*))\leq J_2(u^*,v(u^*)).\ \forall\ v.
\end{align*}
From the definition we can see that the feedback Stackelberg
solution has some equilibrium feature, whereas the open-loop or
closed-loop solution involves a sequential optimization at the level
of the follower and the leader.
\section{Stochastic open-loop Stackelberg differential games}\label{open}
We first introduce some notations.
For two vectors $x$ and $y$ in
$\mathbb{R}^n$, $\langle x,y\rangle$ means the inner product
$\sum_{i=1}^nx_iy_i$. For a function $f$ defined on $\mathbb{R}^n$,
$Df$ or $\partial f$ means the gradient of $f$. Here we specify that
throughout this paper all the vectors are column vectors and the
gradient of a scalar function $f$ is $\frac{\partial f}{\partial
x}=(\frac{\partial f}{\partial x_1},\cdots,\frac{\partial
f}{\partial x_n})^\top$, while the gradient of a vector function
$f=(f_1,\cdots,f_m)^\top$ is a matrix
\begin{equation*}
\frac{\partial f}{\partial x}=\left(
  \begin{array}{ccc}
  \frac{\partial f_1}{\partial x_1}&\cdots&\frac{\partial f_1}{\partial x_n}\\
  \vdots&\vdots&\vdots\\
  \frac{\partial f_m}{\partial x_1}&\cdots&\frac{\partial f_m}{\partial x_n}\\
  \end{array}\right).
\end{equation*}
We further introduce two spaces of adapted processes to be used in the definition of the solution to a FBSDE,
\begin{align*}
  \mathcal{S}^2(0,T;\mathbb{R}^n):=\{&\psi|\ \psi:\Omega\times[0,T]\rightarrow\mathbb{R}^n\ \textrm{is a continous adapted process such that}\\
   &E\sup_{0\leq t\leq T}|\psi(t)|^2<\infty\},\\
  \mathcal{M}^2(0,T;\mathbb{R}^n):=\{&\psi|\ \psi:\Omega\times[0,T]\rightarrow\mathbb{R}^n\ \textrm{is an adapted process such that}\\
  &E\int_0^T|\psi(t)|^2dt<\infty\}.
\end{align*}
And the above two spaces will be simply written as $\mathcal{S}^2$ and $\mathcal{M}^2$, respectively, if no confusion arises.

The admissible strategy spaces for the leader and the follower are denoted by
\begin{equation*}\label{f1}
    \begin{split}
    \mathcal {U}&=\{u|u: \Omega\times[0,T]\rightarrow U\ \textrm{is}\ \mathcal {F}_t\textrm{-adapted and}\ E\int_0^T|u(t)|^2dt<+\infty\},\\
    \mathcal {V}&=\{v|v: \Omega\times[0,T]\rightarrow V\ \textrm{is}\ \mathcal {F}_t\textrm{-adapted and}\ E\int_0^T|v(t)|^2dt<+\infty\},
    \end{split}
\end{equation*}
where $U$ and $V$ are subsets of $\mathbb{R}^{m_1}$ and $\mathbb{R}^{m_2}$.

For the completeness of this paper, we state the formulation of general stochastic open-loop Stackelberg games and the corresponding maximum principle.
From the definition in section \ref{sec2}, given the leader's strategy $u\in\mathcal {U}$, the follower is
faced the stochastic control problem
$$\min_{v\in\mathcal {V}} J_2(u,v)=E[\int_0^Tg_2(t,x(t),u(t),v(t))dt+G_2(x(T))]$$
subject to
\begin{equation*}
\left\{\begin{aligned}
  dx(t)&=f(t,x(t),u(t),v(t))dt+\sigma(t,x(t))dW(t),\\
   x(0)&=x_0.
\end{aligned}\right.
\end{equation*}
Suppose there exists a unique solution $v^*(u(\cdot))\in\mathcal {V}$ to the above problem for each $u\in\mathcal {U}$. If we define
$$H_2(t,x,u,v,p_2,q_2):=\langle p_2, f(t,x,u,v)\rangle+\langle q_2,\sigma(t,x)\rangle+g_2(t,x,u,v),$$
then the maximum principle (see \cite{YongZhou99}) yields that there exists a pair of adapted processes $(p_2,q_2)\in\mathcal{S}^2\times\mathcal{M}^2$ such that
\begin{equation}\label{o3}
  \left\{
  \begin{split}
    dx(t)=&f(t,x(t),u(t),v^*(t))dt+\sigma(t,x(t))dW(t),\\
    -dp_2(t)=&\big\{(\frac{\partial f}{\partial x})^\top(t,x(t),u(t),v^*(t))p_2(t)+(\frac{\partial \sigma}{\partial x})^\top(t,x(t))q_2(t)\\
    &+\frac{\partial g_2}{\partial x}(t,x(t),u(t),v^*(t))\big\}dt-q_2(t)dW(t),\\
    x(0)=&x_0,\ \ p_2(T)=\frac{\partial G_2}{\partial x}(x(T)),\\
    v^*(t)=&arg \min_{v\in V} H_2(t,x(t),u(t),v,p_2(t),q_2(t)).
  \end{split}\right.
\end{equation}
We assume that by the last equation in \eqref{o3} a function
$v=v^*(t,x,u,p_2)$ is implicitly and uniquely defined. After
substituting $v=v^*(t,x,u,p_2)$ into the follower's maximum
principle, we get the control problem faced by the leader
$$\min_{u\in\mathcal {U}}\ J_1(u)=E[\int_0^Tg_1(t,x(t),u(t),v^*(t,x(t),u(t),p_2(t)))dt+G_1(X(T))]$$
subject to
\begin{equation}\label{04}
    \left\{
  \begin{split}
    dx(t)=&f(t,x(t),u(t),v^*(t,x(t),u(t),p_2(t)))dt+\sigma(t,x(t))dW(t),\\
    -dp_2(t)=&\big\{(\frac{\partial f}{\partial x})^\top(t,x(t),u(t),v^*(t,x(t),u(t),p_2(t)))p_2(t)+(\frac{\partial \sigma}{\partial x})^\top(t,x(t))q_2(t)\\
    &+\frac{\partial g_2}{\partial x}(t,x(t),u(t),v^*(t,x(t),u(t),p_2(t)))\big\}dt-q_2(t)dW(t),\\
    x(0)=&x_0,\ \ p_2(T)=\frac{\partial G_2}{\partial x}(x(T)).
  \end{split}\right.
\end{equation}
We denote
\begin{equation}\label{05}
  \begin{split}
    &H_1(t,u,x,y,p_1,p_2,q_1,q_2)\\
=&\langle p_1, f(t,x,u,v^*(t,x,u,p_2))\rangle+\langle q_1,\sigma(t,x)\rangle+g_1(t,x,u,v^*(t,x,u,p_2))\\
&-\langle y,(\frac{\partial f}{\partial x})^\top(t,x,u,v^*(t,x,u,p_2))p_2+(\frac{\partial \sigma}{\partial x})^\top(t,x)q_2+\frac{\partial g_2}{\partial x}(t,x,u,v^*(t,x,u,p_2))\rangle.
  \end{split}
\end{equation}
Suppose $u^*$ is an optimal strategy for the leader. Then the
maximum principle for controlled forward-backward stochastic
differential equations (see, e.g., \cite{ShiWu06} or \cite{Yong10b})
yields that there exists a triple of adapted processes $(p_1,q_1,y)$
such that
\begin{equation}\label{oo6}
  u^*(t)=\arg\min H_1(t,u,x(t),y(t),p_1(t),p_2(t),q_1(t),q_2(t)),
\end{equation}
and
\begin{equation}\label{o6}
  \left\{
  \begin{split}
    dy(t)=&-\frac{\partial H_1}{\partial p_2}dt-\frac{\partial H_1}{\partial q_2}dW(t),\\
    =&-\{(\frac{\partial f}{\partial v}\frac{\partial v^*}{\partial p_2})^\top p_1-\frac{\partial f}{\partial x}y-\sum_{i=1}^{n}y_i(\frac{\partial v^*}{\partial p_2})^\top\frac{\partial}{\partial v}(\frac{\partial f}{\partial x_i})^\top p_2\\
    &-(\frac{\partial^2 g_2}{\partial x\partial v}\frac{\partial v^*}{\partial p_2})^\top y+(\frac{\partial v^*}{\partial p_2})^\top\frac{\partial g_1}{\partial v}\}dt-\frac{\partial \sigma}{\partial x}ydW(t),\\
    dp_1(t)=&-\frac{\partial H_1}{\partial x}dt+q_1dW(t)\\
    =&-\{\frac{\partial f}{\partial x}+\frac{\partial f}{\partial v}\frac{\partial v^*}{\partial x}+(\frac{\partial\sigma}{\partial x})^\top q_1+\frac{\partial g_1}{\partial x}+(\frac{\partial v^*}{\partial x})^\top\frac{\partial g_1}{\partial v}\\
    &-\sum_iy_i[\frac{\partial}{\partial x}(\frac{\partial f}{\partial x_i})^\top+(\frac{\partial v^*}{\partial x})^\top\frac{\partial}{\partial v}(\frac{\partial f}{\partial x_i})^\top]p_2\\
    &-\sum_iy_i\frac{\partial}{\partial x}(\frac{\partial \sigma}{\partial x_i})^\top q_2-(\frac{\partial^2g_2}{\partial x^2}+\frac{\partial^2g_2}{\partial x\partial v}\frac{\partial v^*}{\partial x})^\top y\}dt+q_1dW(t),\\
    y(0)=&0,\ \ p_1(T)=-\frac{\partial^2G_2}{\partial x^2}(x(T))y(T)+\frac{\partial G_1}{\partial x}(x(T)).
    \end{split}\right.
\end{equation}
\section{Stochastic closed-loop Stackelberg games}\label{sec3}
In this section, we consider a stochastic closed-loop Stackelberg
game which is a stochastic version of the paper
\cite{PapavassiCruz79}. The difference between open-loop Stackelberg
games and closed-loop Stackelberg games is that in the former case
the leader's information set is the $\sigma$-field $\mathcal {F}_t$
generated by the Brownian motion $W$, whereas in the latter case the
leader's information set involves both the $\sigma$-field $\mathcal
{F}_t$ and the history of the state $x$. As stated in the
introduction, the difficulty of studying closed-loop Stackelberg
games arises from the fact that the reaction of the follower can not
be determined explicitly if the leader's strategy depends on the
whole history of the state (CLPS information structure). However, if
the leader's strategy is restricted to be memoryless, i.e., only the
current state is involved in the strategy, Papavassilopoulos and
Cruz \cite{PapavassiCruz79} provide an efficient way to solve such a
problem.
As demonstrated in \cite{PapavassiCruz79}, the derivative $\frac{\partial u}{\partial x}$ of the leader's strategy $u$ will appear in the follower's adjoint equation and further in the leader's augmented state equation, which makes the leader's control problem a nonclassical one.
\subsection{The deterministic case revisited}
Since we apply the approach in Papavassilopoulos and Cruz
\cite{PapavassiCruz79} to solve the stochastic version of
closed-loop Stackelberg games, we fist elaborate their techniques in
this subsection. The state and the cost functionals for the leader
and the follower are as follows
\begin{equation}\label{gg3}
\left\{
\begin{split}
  \dot{x}(t)&=f(t,x(t),u(t),v(t)),\\
  x(0)&=x_0,
\end{split}\right.
\end{equation}
\begin{equation}\label{gg4}
\begin{split}
  J_1(u,v)&=\int_0^T g_1(t,x(t),u(t),v(t))dt+G_1(x_T),\\
  J_2(u,v)&=\int_0^T g_2(t,x(t),u(t),v(t))dt+G_2(x_T).
\end{split}
\end{equation}
Given the leader's strategy $u(t,x)_{t\in[0,T]}$ (we omit to write
the dependence on the initial state $x_0$) which is continuously
differentiable in $x$, if the follower's optimal response is $v^*$,
then according to the deterministic maximum principle, there exists
a function $p$ such that
\begin{equation}\label{gg5}
\left\{
  \begin{split}
  &\dot{x}=f(t,x,u,v^*),\\
  &-\dot{p}=(\frac{\partial f}{\partial x}+\frac{\partial f}{\partial u}\frac{\partial u}{\partial x})^\top p+\frac{\partial g_2}{\partial x}+(\frac{\partial u}{\partial x})^\top\frac{\partial g_2}{\partial u},\\
  &\frac{\partial g_2}{\partial v}+\frac{\partial f}{\partial v}p=0,\\
  &x(0)=x_0,\ p(T)=\frac{\partial G_2(x(T))}{\partial x}.
  \end{split}\right.
\end{equation}
Suppose we can get the unique solution
\begin{equation}\label{gg6}
  v=\varphi(t,x,p,u)
\end{equation}
from solving
$$\frac{\partial g_2}{\partial v}+\frac{\partial f}{\partial v}p=0.$$
Then, after substituting the expression \eqref{gg6} into \eqref{gg5} and $J_1$, the leader will be faced with the following problem
\begin{equation}\label{gg7}
    \min_{u} J_1(u)=\int_0^T g_1(t,x,u,\varphi(t,x,p,u))dt+G_1(x_T)
\end{equation}
subject to
\begin{equation}\label{gg8}
\left\{
  \begin{split}
      \dot{x}&=f(t,x,u,\varphi(t,x,p,u)),\\
  -\dot{p}&=[\frac{\partial f}{\partial x}+\frac{\partial f}{\partial u}\frac{\partial u}{\partial x}]^\top p+\frac{\partial g_2}{\partial x}+(\frac{\partial u}{\partial x})^\top\frac{\partial g_2}{\partial u},\\
   x(0)&=x_0,\ p(T)=\frac{\partial G_2(x(T))}{\partial x}.
  \end{split}\right.
\end{equation}
Since the derivative $\frac{\partial u}{\partial x}$ of the control
variable $u$ is involved in the adjoint equation \eqref{gg8}, the
above problem is a nonclassical one. The authors provide two
approaches to overcome this difficulty. One is the direct
application of variational techniques. The other one is more
interesting, which reveals the relative independence of $u$ and
$\frac{\partial u}{\partial x}$ and the time inconsistency property.
To be more precise, with $\frac{\partial u}{\partial x}$ replaced by
another new control variable $\tilde{u}$, they construct a new
classical problem
\begin{equation}\label{gg9}
    \min_{u,\tilde{u}} \tilde{J}_1(u)=\int_0^T g_1(t,x,u,\varphi(t,x,p,u))dt+G_1(x_T)
\end{equation}
subject to
\begin{equation}\label{gg10}
\left\{
  \begin{split}
      \dot{x}&=f(t,x,u,\varphi(t,x,p,u)),\\
  -\dot{p}&=(\frac{\partial f}{\partial x}+\frac{\partial f}{\partial u}\tilde{u})^\top p+\frac{\partial g_2}{\partial x}+(\tilde{u})^\top\frac{\partial g_2}{\partial u},\\
   x(0)&=x_0,\ p(T)=\frac{\partial G_2(x(T))}{\partial x},
  \end{split}\right.
\end{equation}
and prove the equivalence of the above nonclassical problem
\eqref{gg7}-\eqref{gg8} and the constructed classical problem \eqref{gg9}-\eqref{gg10} in the
sense that they have the same optimal trajectory and costs.
Indeed, if we denote by $J_1^*$ and $J_2^*$ the optimal values of problems \eqref{gg7}-\eqref{gg8} and \eqref{gg9}-\eqref{gg10}, respectively, then $J_1^*\geq J_2^*$.
On the other hand, suppose that $(u^*,\tilde{u}^*)$ is an optimal control for problem \eqref{gg9}-\eqref{gg10}
and $x^*$ is the corresponding trajectory, then control
\begin{equation}\label{gg11}
  \hat{u}(t,x):=\tilde{u}^*(t)x+u^*(t)-\tilde{u}^*(t)x^*(t)
\end{equation}
yields the same trajectory $x^*$ and thus the same cost in problem \eqref{gg7}-\eqref{gg8}. Consequently, $J_1^*=J_2^*$ and $\hat{u}$ is an optimal control for the nonclassical problem \eqref{gg7}-\eqref{gg8}. Therefore, one can substitute $\frac{\partial u}{\partial x}$ for
$\tilde{u}$ in the maximum principle for the problem \eqref{gg9}-\eqref{gg10} and
finally get the maximum principle for the nonclassical problem
\eqref{gg7}-\eqref{gg8} faced by the leader.
\begin{rem}
  Given the leader's strategy $u(t,x)_{t\in[0,T]}$, the follower can also solve the following Hamilton-Jacobi-Bellman equation
  \begin{equation}\label{gg12}
    \left\{
    \begin{split}
      &\frac{\partial V_2}{\partial t}+\inf_{v\in\mathbb{R}^n}\{\langle\frac{\partial V_2}{\partial x}, f(t,x,u(t,x),v)\rangle+g_2(t,x,u(t,x),v)\}=0,\\
      &V_2(T,x)=G_2(x),
    \end{split}\right.
  \end{equation}
  and obtain the optimal feedback strategy
  $$v^*(t,x)=arg\inf_{v\in\mathbb{R}^n}\{\langle\frac{\partial V_2}{\partial x}, f(t,x,u(t,x),v)\rangle+g_2(t,x,u(t,x),v)\}.$$
  However, since $V_2$ depends on the whole function $u(\cdot)$, it is impossible for the leader to employ dynamic programming to depict his optimal strategy. The maximum principle approach turns out to be more appropriate for closed-loop Stackelberg games.
\end{rem}
\subsection{The stochastic case}\label{sgs}
In this subsection we tackle closed-loop Stackelberg games in the stochastic context, with the same idea as \cite{PapavassiCruz79}. After introducing a stochastic disturbance term in the state equation \eqref{gg3}, the adjoint equation for the follower, which also acts as the state equation in the leader's problem, will be a BSDE rather than an ODE with a terminal condition. Therefore, the leader will end up with a control problem in which the state equation consists of a SDE and a BSDE, with the feature that both the control $u$ and its derivative $\frac{\partial u}{\partial x}$ are introduced in the controlled system. With the results on the maximum principle for control problems of FBSDEs, we present the necessary conditions for the leader's optimal strategy to satisfy in a closed-loop Stackelberg game.

We first introduce the admissible strategy spaces for the leader and the follower
\begin{equation*}
\begin{split}
  \mathcal {U}&:=\{u:u:\Omega\times[0,T]\times\mathbb{R}^n\rightarrow U\ \textrm{is}\ \mathcal {F}_t\textrm{-adapted for any}\ x\in\mathbb{R}^n, u(t,x)\ \textrm{is continuously}\\
   &\ \ \textrm{differentible in}\ x\ \textrm{for any}\ (\omega,t)\in\Omega\times[0,T],\ \textrm{and the derivative}\ \frac{\partial u}{\partial x}\ \textrm{is bounded}\},\\
  \mathcal {V}&:=\{v:v:\Omega\times[0,T]\times\mathbb{R}^n\rightarrow V\ \textrm{is}\ \mathcal {F}_t\textrm{-adapted for any}\ x\in\mathbb{R}^n\}.
\end{split}
\end{equation*}
Then, given the leader's strategy $u(t,x)$, the follower's optimal response strategy $v^*(t,x)$ is a solution to the following classical optimal control problem,
\begin{equation}\label{g1}
  \min_{v\in\mathcal {V}} J_2=E\int_0^Tg_2(t,x(t),u(t,x(t)),v(t))dt+EG_2(X(T)),
\end{equation}
subject to
\begin{equation}
\left\{
  \begin{split}\label{g2}
    dx(t)&~=f(t,x(t),u(t,x(t)),v(t))dt+\sigma(t,x(t))dW(t),\\
    x(0)&~=x_0.
  \end{split}\right.
\end{equation}
According to the maximum principle, there exists a pair of adapted processes $(p_2, q_2)\in\mathcal{S}^2\times\mathcal{M}^2$ such that
\begin{equation}\label{g3}
    v^*(t,x(t))=arg \min_{v\in V}\{\langle p_2(t),f(t,x(t),u(t,x(t)),v)\rangle+\langle q_2,\sigma(t,x)\rangle+g_2(t,x(t),u(t,x(t)),v)\},
\end{equation}
and
\begin{equation}\label{g4}
\left\{
  \begin{split}
    dp_2(t)=&-[(\frac{\partial f}{\partial x}+\frac{\partial f}{\partial u}\frac{\partial u}{\partial x})^\top p_2+(\frac{\partial \sigma}{\partial x})^\top q_2\\
    &+\frac{\partial g_2}{\partial x}+(\frac{\partial u}{\partial x})^\top\frac{\partial g_2}{\partial u}]dt+q_2(t)dW(t),\\
    p_2(T)=&\frac{\partial G_2}{\partial x}(x(T)),
  \end{split}\right.
\end{equation}
where $x(\cdot)$ is the solution of \eqref{g2} with policies
$u(t,x)$ and $v^*(t,x)$. Suppose for any leader's strategy $u(t,x)$,
there exists a unique strategy $v^*(t,x)$ for the follower that
minimizes his cost functional $J_2$. We also suppose that \eqref{g3}
yields $v^*=\varphi(t,x,u,p_2)$. Then, taking into account the
follower's optimal response, the leader will be confronted with the
optimal control problem
\begin{equation}\label{g5}
  \min_{u\in\mathcal {U}} J_1=E\int_0^Tg_1(t,x(t),u(t,x(t)),\varphi(t,x(t),u(t,x(t)),p_2(t)))dt+EG_1(x(T))
\end{equation}
subject to
\begin{equation}\label{g6}
\left\{
  \begin{split}
    dx(t)=&f(t,x(t),u(t,x(t)),\varphi(t,x(t),u(t,x(t)),p_2(t)))dt+\sigma(t,x(t))dW(t),\\
    dp_2(t)=&-[(\frac{\partial f}{\partial x}+\frac{\partial f}{\partial u}\frac{\partial u}{\partial x})^\top p_2+(\frac{\partial \sigma}{\partial x})^\top q_2\\
    &+\frac{\partial g_2}{\partial x}+(\frac{\partial u}{\partial x})^\top\frac{\partial g_2}{\partial u}]dt+q_2(t)dW(t),\\
    x(0)=&x_0,\ \ p_2(T)=\frac{\partial G_2}{\partial x}(x(T)).
  \end{split}\right.
\end{equation}
It can be seen that, after incorporating the follower's adjoint
variable as an augmented state, the leader encounters a controlled
FBSDE, which is the counterpart of \eqref{gg8} in the deterministic
context. For the solvability of FBSDEs, one can refer to
\cite{Maetal94}, \cite{PengWu99}, \cite{PardouxTang99},
\cite{Yong10a}, and the references therein. Here we assume that the
leader's problem is well-posed, i.e., for each $u(\cdot)\in\mathcal
{U}$, there exists a unique triple
$(x,p_2,q_2)\in\mathcal{S}^2\times\mathcal{S}^2\times\mathcal{M}^2$
solving FBSDE \eqref{g6}. Since the derivative $\frac{\partial
u}{\partial x}$ of the control variable $u$ is involved in the BSDE
in \eqref{g6}, we apply the techniques in the deterministic case to
relate the above nonclassical control problem to a classical one.

Consider the optimization problem of a controlled FBSDE
\begin{equation}\label{g7}
  \min_{u_1,u_2} J(u_1(\cdot),u_2(\cdot))=E\int_0^Tg_1(t,x(t),u_1(t),\varphi(t,x(t),u_1(t),p_2(t)))dt+EG_1(x(T)),
\end{equation}
subject to
\begin{equation}\label{g8}
\left\{
  \begin{split}
    dx(t)~=&f(t,x(t),u_1(t),\varphi(t,x(t),u_1(t),p_2(t)))dt+\sigma(t,x(t))dW(t),\\
    dp_2(t)~=&-[(\frac{\partial f}{\partial x}+\frac{\partial f}{\partial u}u_2)^\top p_2+(\frac{\partial \sigma}{\partial x})^\top q_2\\
    &+\frac{\partial g_2}{\partial x}+(u_2)^\top\frac{\partial g_2}{\partial u}]dt+q_2(t)dW(t),\\
    x(0)~=&x_0,\ p_2(T)=\frac{\partial G_2}{\partial x}(x(T)),
  \end{split}\right.
\end{equation}
where $u_1$ and $u_2$ are adapted control variables with values in $U$ and some bounded subset in $\mathbb{R}^{m_1\times n}$, respectively. 
Again we assume the above problem is well-posed. Obviously, if we denote by $J_1^*$ and $J^*$ the optimal values of problems \eqref{g5}-\eqref{g6} and \eqref{g7}-\eqref{g8}, respectively, then $J_1^*\geq J^*$. On the other hand, if $(u_1^*,u_2^*)$ is a solution to problem \eqref{g7}-\eqref{g8} and $x^*$ is the corresponding optimal state trajectory, then we can construct an optimal control $u^*$ for problem \eqref{g5}-\eqref{g6} as follows
\begin{equation}\label{g9}
  u^*(t,x):=u_2^*(t)x+u_1^*(t)-u_2^*(t)x^*(t).
\end{equation}
Therefore, $J_1^*=J^*$, which implies that if $u^*(t,x)$ is a solution to problem \eqref{g5}-\eqref{g6} and $x^*$ is the corresponding optimal state trajectory, then $(u^*(t,x^*(t)),\frac{\partial u^*}{\partial x}(t,x^*(t)))$ is an optimal control for problem \eqref{g7}-\eqref{g8} and leads to the same optimal state trajectory $x^*$. Thus we can obtain the maximum principle for problem \eqref{g5}-\eqref{g6} faced by the leader by means of the necessary conditions satisfied by the optimal control for problem \eqref{g7}-\eqref{g8} (see, e.g., \cite{ShiWu06} or \cite{Yong10b}). To this end, we
define
\begin{equation}\label{g10}
   \begin{split}
     &H_1(t,u_1,u_2,x,y,p_1,p_2,q_1,q_2)\\
     =~&\langle p_1, f(t,x,u_1,\varphi(t,x,u_1,p_2))\rangle+\langle q_1,\sigma(t,x)\rangle-\langle y,(\frac{\partial f}{\partial x}+\frac{\partial f}{\partial u}u_2)^\top p_2\\
     &+(\frac{\partial \sigma}{\partial x})^\top q_2+\frac{\partial g_2}{\partial x}+(u_2)^\top\frac{\partial g_2}{\partial u}\rangle+g_1(t,x,u_1,\varphi(t,x,u_1,p_2)).
    \end{split}
\end{equation}
\begin{thm}
  Suppose $u^*(t,x)$ is a solution to the leader's problem \eqref{g5}-\eqref{g6}.
   Then there exists a triple $(y,p_1,q_1)$ such that
  \begin{equation}\label{g11}
  \begin{split}
    &(u^*(t,x(t)),\frac{\partial u^*}{\partial x}(t,x(t)))\\
    =&arg_{(u^1,u^2)}\min H_1(t,u^1,u^2,x(t),y(t),p_1(t),p_2(t),q_1(t),q_2(t))
  \end{split}
  \end{equation}
  and
  \begin{equation}\label{g12}
  \left\{
    \begin{split}
      dy(t)=&-\frac{\partial H_1}{\partial p_2}dt-\frac{\partial H_1}{\partial q_2}dW(t),\\
      dp_1(t)=&-\frac{\partial H_1}{\partial x}dt+q_1(t)dW(t),\\
      y(0)=&~0,\ \ p_1(T)=-\frac{\partial^2 G_2}{\partial x^2}(x(T))y(T)+\frac{\partial G_1}{\partial x}(x(T)),
    \end{split}\right.
  \end{equation}
  where $(x,p_2,q_2)$ is the solution of state equation \eqref{g6} with control $u^*(t,x)$, and $\frac{\partial H_1}{\partial p_2}$, $\frac{\partial H_1}{\partial q_2}$ and $\frac{\partial H_1}{\partial x}$ in \eqref{g12} are evaluated at
  $$(t,u^*(t,x(t)),\frac{\partial u^*}{\partial x}(t,x(t)),x(t),y(t),p_1(t),p_2(t),q_1(t),q_2(t)).$$
\end{thm}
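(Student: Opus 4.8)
The plan is to reduce the leader's nonclassical problem \eqref{g5}-\eqref{g6} to the auxiliary classical FBSDE control problem \eqref{g7}-\eqref{g8}, apply the known maximum principle for controlled FBSDEs (\cite{ShiWu06}, \cite{Yong10b}) to the latter, and transfer the resulting necessary conditions back. First I would fix an optimal feedback $u^*(t,x)$ for \eqref{g5}-\eqref{g6} and let $(x,p_2,q_2)$ be the corresponding solution of \eqref{g6}. The pair $\big(u^*(t,x(t)),\frac{\partial u^*}{\partial x}(t,x(t))\big)$ is $\mathcal{F}_t$-adapted, its second component taking values in a bounded set since $\frac{\partial u^*}{\partial x}$ is bounded by the definition of $\mathcal{U}$; hence it is admissible for \eqref{g7}-\eqref{g8}, and substituting it there turns \eqref{g8} into \eqref{g6}, so by the assumed well-posedness it produces the same state triple $(x,p_2,q_2)$ and the same value, giving $J^*\le J_1^*$. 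For the reverse inequality I would use the construction \eqref{g9}: from an optimal $(u_1^*,u_2^*)$ of \eqref{g7}-\eqref{g8} with trajectory $x^*$, the affine field $u^*(t,x):=u_2^*(t)x+u_1^*(t)-u_2^*(t)x^*(t)$ lies in $\mathcal{U}$ (it is affine in $x$, adapted, with bounded derivative $u_2^*$) and, inserted into \eqref{g6}, reproduces \eqref{g8} at $(u_1^*,u_2^*)$; by uniqueness its trajectory is again $x^*$, so it has value $J^*$ in \eqref{g5}-\eqref{g6} and $J_1^*\le J^*$. Thus $J_1^*=J^*$, and therefore, for the leader-optimal $u^*$ above, the admissible pair $\big(u^*(t,x(t)),\frac{\partial u^*}{\partial x}(t,x(t))\big)$ attains the optimal value of \eqref{g7}-\eqref{g8} and is optimal for it.

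Next I would apply the FBSDE maximum principle to \eqref{g7}-\eqref{g8}, regarded as a classical optimal control problem for a coupled FBSDE with forward component $x$ and backward component $(p_2,q_2)$. Because the diffusion coefficient $\sigma(t,x)$ does not depend on either control, the global (spike-variation) version applies, with $H_1$ of \eqref{g10} as Hamiltonian: it furnishes a forward adjoint process $y$ associated with the backward equation for $p_2$ and an adjoint pair $(p_1,q_1)$ solving the BSDE dual to the forward equation for $x$, satisfying $dy=-\frac{\partial H_1}{\partial p_2}dt-\frac{\partial H_1}{\partial q_2}dW$ and $dp_1=-\frac{\partial H_1}{\partial x}dt+q_1dW$. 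The initial condition $y(0)=0$ comes from the absence of any $p_2(0)$-term in the cost \eqref{g7}, while $p_1(T)=-\frac{\partial^2 G_2}{\partial x^2}(x(T))y(T)+\frac{\partial G_1}{\partial x}(x(T))$ combines the contribution propagated through $y$ by the terminal coupling $p_2(T)=\frac{\partial G_2}{\partial x}(x(T))$ with that of the leader's terminal cost $G_1$. Evaluating the pointwise minimum condition of this maximum principle along the optimal pair $\big(u^*(t,x(t)),\frac{\partial u^*}{\partial x}(t,x(t))\big)$ yields \eqref{g11}, and \eqref{g12} is precisely the adjoint system just described, which completes the argument.

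The substantive work lies outside this chain of implications: one must check the hypotheses under which the cited FBSDE maximum principle holds — sufficient differentiability and growth of $f,\sigma,g_1,g_2,G_1,G_2$, smoothness of the implicit response $\varphi$ obtained from \eqref{g3} via the implicit function theorem, and well-posedness of the fully coupled FBSDEs \eqref{g6} and \eqref{g8} for every admissible control — all of which are assumed in this section. Within the reduction itself, the point I expect to require the most care is that the reconstructed feedback in \eqref{g9} must stay admissible; this is exactly why the range of $u_2$ in \eqref{g7}-\eqref{g8} is taken to be a bounded subset of $\mathbb{R}^{m_1\times n}$, to match the boundedness of $\frac{\partial u}{\partial x}$ built into $\mathcal{U}$. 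Granting this, the equivalence $J_1^*=J^*$ is exact and the maximum principle transfers without further modification.
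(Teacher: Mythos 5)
Your proposal is correct and follows essentially the same route as the paper: the paper's own justification is exactly the reduction to the auxiliary classical problem \eqref{g7}--\eqref{g8}, the two-sided comparison $J_1^*\geq J^*$ and $J_1^*\leq J^*$ via the affine reconstruction \eqref{g9}, and then an appeal to the maximum principle for controlled FBSDEs from \cite{ShiWu06} and \cite{Yong10b}. Your added remarks on admissibility of the reconstructed affine feedback and on the boundedness of the range of $u_2$ make explicit points the paper leaves implicit, but do not change the argument.
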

\begin{rem}
  If $u$ is independent of $x$, we conclude in comparison with the arguments in section \ref{open} that the closed-loop Stackelberg solution is
reduced to the open-loop Stackelberg solution and the maximum
principles for both cases are identical.
\end{rem}
\section{The linear quadratic Stackelberg games}
In this section we consider linear quadratic open-loop and
closed-loop Stackelberg games. Yong derives the Riccati equation for
the open-loop Stackelberg game in \cite{Yong02} where the weighting
matrices of the state and controls in the cost functionals are
assumed not necessarily positive definite, and controls are allowed
to appear in the diffusion term. For the follower's problem, the
author uses the solutions of the follower's Riccati equation and a
BSDE to give the state feedback representation of the follower's
optimal strategy (one can also refer to \cite[Page 313]{YongZhou99}
for a similar derivation of the state feedback representation for a
linear quadratic stochastic control problem with deterministic
coefficients). To be precise, the author assumes that the follower's
adjoint variable $p_2$ in \eqref{o9} has the affine form
$$p_2=Px+\phi.$$
Applying It\^{o}'s formula to $p_2$ and taking into account
\eqref{o7} and \eqref{o9}, one can get the follower's Riccati
equation with respect to $P$ and a BSDE for $\phi$. Then the author
views the above BSDE for $\phi$, which contains the solution of the
follower's Riccati equation and the leader's adopted strategy, and
the original state equation as the leader's controlled system and
further derives the leader's Riccati equation. Under some
assumptions the author also discusses the solvability of the Riccati
equations for the case of deterministic coefficients. Here we
consider the follower's Hamiltonian system \eqref{o10} as the leader's
controlled state equation and hence the state feedback
representation of the Stackelberg solution can be obtained at the
same time for the leader and the follower. As a result, the
corresponding Riccati equation here is of different form from the
one in \cite{Yong02}. Since we deal with the case without decision
variables in the diffusion term, we also show, under some
appropriate assumptions, the existence and uniqueness of the
solution to the derived Riccati equation with stochastic
coefficients by means of a linear transformation to the standard
stochastic Riccati equation. For the linear quadratic closed-loop
Stackelberg game, we will see that the Hamiltonian system for the
leader is no longer linear, which prevents us from getting an
exogenous Riccati equation if we proceed the same way as in the
open-loop case. Instead, we assume that the forward variable $y$ is
linear with respect to the original state $x$ and derive an
exogenous FBSDE which plays the same role as the Riccati equation in
open-loop case. Throughout this section we assume the coefficients
$A,B_i,C,Q_i,R_i,G_i$ are adapted bounded matrices, $Q_i,R_i,G_i$
are symmetric and nonnegative, and $R_i$ are uniformly positive,
$i=1,2$.
\subsection{The open-loop case}
The state equation and cost functionals are given as follows.
\begin{equation}\label{o7}
\left\{
\begin{split}
 dx(t)&=(Ax+B_1u+B_2v)dt+CxdW(t),\\
 x(0)&=x_0,
\end{split}\right.
\end{equation}
\begin{equation}\label{o8}
\begin{split}
J_1(u,v)&=\frac{1}{2}E[\int_0^T(\langle Q_1x(t),x(t)\rangle+\langle R_1u(t),u(t)\rangle)dt+\langle G_1x(T),x(T)\rangle],\\
J_2(u,v)&=\frac{1}{2}E[\int_0^T(\langle Q_2x(t),x(t)\rangle+\langle R_2v(t),v(t)\rangle)dt+\langle G_2x(T),x(T)\rangle].
\end{split}
\end{equation}
Given leader's strategy $u\in\mathcal {U}$, it is well known that the follower's problem
$$\min_{v\in\mathcal {V}}\ J_2(u,v)=\frac{1}{2}E[\int_0^T(\langle Q_2x(t),x(t)\rangle+\langle R_2v(t),v(t)\rangle)dt+\langle G_2x(T),x(T)\rangle]$$
subject to
\begin{equation*}
\left\{
\begin{split}
  dx(t)&=(Ax+B_1u+B_2v)dt+CxdW(t),\\
   x(0)&=x_0,
\end{split}\right.
\end{equation*}
is a standard linear quadratic optimal control problem and the unique solution is
$$v^*(t)=-R_2^{-1}B_2^\top p_2,$$
where $p_2$ is the first part of the solution $(p_2,q_2)\in\mathcal{S}^2\times\mathcal{M}^2$ to the adjoint equation
\begin{equation}\label{o9}
\left\{
\begin{split}
  -dp_2(t)&=(A^\top p_2+C^\top q_2+Q_2x)dt-q_2dW(t),\\
  p_2(T)&=G_2x(T).
\end{split}\right.
\end{equation}
Then, the leader's problem is
$$\min_{u\in\mathcal {U}}\ J_1(u)=\frac{1}{2}E[\int_0^T(\langle Q_1x(t),x(t)\rangle+\langle R_1u(t),u(t)\rangle)dt+\langle G_1x(T),x(T)\rangle]$$
subject to (the Hamiltonian system of the follower)
\begin{equation}\label{o10}
  \left\{
  \begin{split}
    dx(t)&=(Ax+B_1u-B_2R_2^{-1}B_2^\top p_2)dt+CxdW(t),\\
  -dp_2(t)&=(A^\top p_2+C^\top q_2+Q_2x)dt-q_2dW(t),\\
  x(0)&=x_0,\ p_2(T)=G_2x(T).
 \end{split}\right.
\end{equation}
The leader's problem is well-posed since for every $u\in\mathcal
{U}$, the coefficients of the system \eqref{o10} satisfy the
monotonicity condition proposed by Peng and Wu \cite{PengWu99},
which yields the existence and uniqueness of the solution
$(x,p_2,q_2)$ to the system \eqref{o10}. Moreover, by similar
arguments of Tang \cite{Tang03}, we can get the following estimate
\begin{equation}\label{o11}
  E\sup_{0\leq t\leq T}|p_2(t)|^2+E\sup_{0\leq t\leq T}|x(t)|^2+E\int_0^T|q_2(t)|^2dt\leq L(|x_0|^2+E\int_0^T|u(t)|^2dt),
\end{equation}
where $L$ is a positive constant. With this estimate, we can adopt
relevant arguments for standard linear quadratic optimal control
problems in \cite{Meng11} and get the fact that the leader's
objective functional $J_1(u)$ is convex in $u$,
$$\lim_{\|u\|\rightarrow\infty}J_1(u)=\infty,$$
and $J_1(u)$ is Fr\'{e}chet differentiable over $\mathcal {U}$ with the representation
\begin{equation}
  \begin{split}
\langle J_1'(u),w\rangle=&E\int_0^T(\langle Q_1(t)x(t;x_0,u),x(t;0,w)\rangle+\langle R_1(t)u(t),w(t)\rangle)dt\\
    &+\langle G_1x(T;x_0,u),x(T;0,w)\rangle.
  \end{split}
\end{equation}
Here we use $x(\cdot;x_0,u)$ to represent the solution of
\eqref{o10} with initial state $x(0)=x_0$ and control $u$. As a
conclusion of Proposition 2.1.2 in \cite{EkelandTeman76}, we know
that the leader has a unique optimal strategy $u^*\in\mathcal {U}$
which satisfies $J_1'(u^*)=0$. Now we use dual representation to
characterize the optimal strategy $u^*$.
\begin{thm}
  For each $u\in\mathcal {U}$, there exists a unique solution $(x,y,p_1,q_1,p_2,q_2)$ to the FBSDE
  \begin{equation}\label{o12}
  \left\{
    \begin{split}
      dx(t)&=(Ax+B_1u-B_2R_2^{-1}B_2^\top p_2)dt+CxdW(t),\\
  -dp_2(t)&=(A^\top p_2+C^\top q_2+Q_2x)dt-q_2dW(t),\\
  dy(t)&=(Ay+B_2R_2^{-1}B_2^\top p_1)dt+CydW(t),\\
  -dp_1(t)&=(A^\top p_1+C^\top q_1-Q_2y+Q_1x)dt-q_1dW(t),\\
  x(0)&=x_0,\ y(0)=0,\ p_1(T)=-G_2y(T)+G_1x(T),\ p_2(T)=G_2x(T).
    \end{split}\right.
  \end{equation}
  The necessary and sufficient condition for $u$ to be the leader's optimal strategy is
  $$u(t)=-R_1^{-1}B_1p_1(t).$$
  \end{thm}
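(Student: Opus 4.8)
The plan is to reduce the statement to the first-order condition $J_1'(u)=0$, which, by the convexity, coercivity and Fr\'echet differentiability of $J_1$ established just before the theorem, is necessary and sufficient for $u$ to be the leader's optimal strategy and is met by exactly one $u\in\mathcal U$. So the substance of the proof is (i) unique solvability of the augmented system \eqref{o12} for every $u\in\mathcal U$, and (ii) the identification of $\langle J_1'(u),w\rangle$ in terms of the adjoint process $p_1$ produced by \eqref{o12}.

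For (i) I would exploit the triangular structure of \eqref{o12}: its first two equations, in $(x,p_2,q_2)$, are precisely the follower's Hamiltonian system \eqref{o10} with the given control $u$, already known to have a unique solution in $\mathcal S^2\times\mathcal S^2\times\mathcal M^2$ and to satisfy the estimate \eqref{o11}. With $x$ fixed, the last two equations form a linear FBSDE for $(y,p_1,q_1)$ in which $Q_1x\in\mathcal M^2$ and $G_1x(T)\in L^2(\Omega,\mathcal F_T)$ enter as inhomogeneous data; the change of variable $\tilde y=-y$ turns it into a system of exactly the structural form of \eqref{o10} (forward variable $\tilde y$, weights $Q_2,G_2$, coupling matrix $B_2R_2^{-1}B_2^\top$), so the same Peng--Wu monotonicity condition yields a unique solution, and the Tang-type a priori estimate applied to this block puts $(y,p_1,q_1)$ in $\mathcal S^2\times\mathcal S^2\times\mathcal M^2$; in particular $p_1\in\mathcal S^2$, so $-R_1^{-1}B_1^\top p_1\in\mathcal U$.

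For (ii) I would first identify the variational equation: perturbing $u$ to $u+\epsilon w$ and differentiating \eqref{o10} in $\epsilon$, the G\^ateaux derivative of $(x,p_2,q_2)$ is the triple $(\xi,\eta_2,\zeta_2)$ solving the homogeneous analogue of \eqref{o10} with null initial value and forcing $B_1w$; in the notation of the text $\xi=x(\cdot;0,w)$ and $\eta_2$ is its costate, and \eqref{o11} with $x_0=0$ gives their square-integrability. I would then apply It\^o's formula to $\langle p_1,\xi\rangle+\langle y,\eta_2\rangle$ on $[0,T]$ and take expectations: the stochastic integrals vanish by integrability, the $A^\top$- and $C^\top$-terms cancel against their transposes, and the coupling terms carrying $B_2R_2^{-1}B_2^\top$ and $Q_2$ cancel in pairs by symmetry, leaving
\[
E\langle p_1(T),\xi(T)\rangle+E\langle y(T),\eta_2(T)\rangle=E\int_0^T\bigl(\langle B_1^\top p_1,w\rangle-\langle Q_1x,\xi\rangle\bigr)\,dt .
\]
Substituting $\xi(0)=y(0)=0$, $p_1(T)=-G_2y(T)+G_1x(T)$, $\eta_2(T)=G_2\xi(T)$ and using the symmetry of $G_2$, the left-hand side collapses to $E\langle G_1x(T),\xi(T)\rangle$, and feeding this into the representation of $J_1'$ recalled above gives
\[
\langle J_1'(u),w\rangle=E\int_0^T\bigl(\langle Q_1x,\xi\rangle+\langle R_1u,w\rangle\bigr)\,dt+E\langle G_1x(T),\xi(T)\rangle=E\int_0^T\langle R_1u+B_1^\top p_1,\,w\rangle\,dt .
\]

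Since $w\in\mathcal U$ is arbitrary, $J_1'(u)=0$ holds if and only if $R_1u(t)+B_1^\top p_1(t)=0$ for a.e.\ $t$, $P$-a.s., i.e.\ (as $R_1$ is uniformly positive, hence boundedly invertible) $u(t)=-R_1^{-1}B_1^\top p_1(t)$; together with the convexity and coercivity of $J_1$ this is the asserted necessary and sufficient condition. I expect the It\^o/duality computation in (ii) to be the only delicate point---one must keep the signs straight across the two backward equations and confirm the integrability that annihilates the martingale terms---but both are routine once \eqref{o11} and the memberships in $\mathcal S^2$, $\mathcal M^2$ are in hand.
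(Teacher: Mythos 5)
Your proposal is correct and follows essentially the same route as the paper: the same decoupled triangular solvability argument (follower block first, then the $(y,p_1,q_1)$ block via $\tilde y=-y$ and the Peng--Wu monotonicity condition), and the same duality computation, since your $\langle p_1,\xi\rangle+\langle y,\eta_2\rangle$ is exactly the paper's $\langle p_1(t;x_0,u),x(t;0,w)\rangle+\langle p_2(t;0,w),y(t;x_0,u)\rangle$. The only cosmetic difference is that you obtain necessity from the first-order condition $J_1'(u)=0$ (using the convexity, coercivity and Fr\'echet differentiability established just before the theorem), whereas the paper cites the FBSDE maximum principle \eqref{oo6}--\eqref{o6} for that direction; both are available in the surrounding text.
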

  \begin{proof}
    It can be seen that the FBSDEs consisting of $(x,p_2,q_2)$ and $(y,p_1,q_1)$ are two decoupled systems.
Therefore, for given $u\in\mathcal {U}$, we can first get the unique
solution $(x,p_2,q_2)$ to the equation
    \begin{equation}\label{o13}
      \left\{
      \begin{split}
    dx(t)&=(Ax+B_1u-B_2R_2^{-1}B_2^\top p_2)dt+CxdW(t),\\
    -dp_2(t)&=(A^\top p_2+C^\top q_2+Q_2x)dt-q_2dW(t),\\
    x(0)&=x_0,\  p_2(T)=G_2x(T).
      \end{split}\right.
    \end{equation}
    Let $\tilde{y}:=-y$. Then FBSDE consisting of $(y,p_1,q_1)$ in \eqref{o12} can be converted into the following one
    \begin{equation}\label{o14}
      \left\{
      \begin{split}
        d\tilde{y}(t)&=(A\tilde{y}-B_2R_2^{-1}B_2^\top p_1)dt+C\tilde{y}dW(t),\\
        -dp_1(t)&=(A^\top p_1+C^\top q_1+Q_2\tilde{y}+Q_1x)dt-q_1dW(t),\\
        \tilde{y}(0)&=0,\ p_1(T)=G_2\tilde{y}(T)+G_1x(T).
      \end{split}\right.
    \end{equation}
    The coefficients in the above system also satisfy the monotonicity condition in \cite{PengWu99}. So there
exists a unique solution to \eqref{o14}, which also implies the
existence and uniqueness of the solution $(x,y,p_1,q_1,p_2,q_2)$ to
FBSDE \eqref{o12}. The necessary part comes directly from the
maximum principle \eqref{oo6} and \eqref{o6}. Now we prove the
sufficient part. Denote by
$$(x(\cdot;x_0,u),y(\cdot;x_0,u),p_1(\cdot;x_0,u),q_1(\cdot;x_0,u),p_2(\cdot;x_0,u),q_2(\cdot;x_0,u))$$
    and
    $$(x(\cdot;0,w),y(\cdot;0,w),p_1(\cdot;0,w),q_1(\cdot;0,w),p_2(\cdot;0,w),q_2(\cdot;0,w))$$
    the solutions to the system of FBSDEs \eqref{o12} with initial states and controls as $(x_0,u)$ and $(0,w)$, respectively. Using It\^{o}'s formula to compute
    $$\langle p_1(t;x_0,u),x(t;0,w)\rangle+\langle p_2(t;0,w),y(t;x_0,u)\rangle$$
    and taking the expectation, we can get
    \begin{equation}\label{o15}
      \begin{split}
      \langle J_1'(u),w\rangle
      =&E\langle G_1x(T;x_0,u),x(T;0,w)\rangle\\
      &+E\int_0^T\langle Q_1(t)x(t;x_0,u),x(t;0,w)\rangle +\langle R_1(t)u(t),w(t)\rangle dt\\
      =&E\int_0^T\langle R_1(t)u(t)+B_1^\top(t)p_1(t;x_0,u),w(t)\rangle dt.
      \end{split}
    \end{equation}
    Obviously $u=-R_1^{-1}B_1^\top p_1$ makes $J_1'(u)$ equal to zero, so it is an optimal strategy for the leader.
  \end{proof}
From the uniqueness of the optimal strategy, we also know that FBSDE
\begin{equation}\label{o16}
  \left\{
    \begin{split}
  dx(t)&=(Ax-B_1R_1^{-1}B_1^\top p_1-B_2R_2^{-1}B_2^\top p_2)dt+CxdW(t),\\
  -dp_2(t)&=(A^\top p_2+C^\top q_2+Q_2x)dt-q_2dW(t),\\
  dy(t)&=(Ay+B_2R_2^{-1}B_2^\top p_1)dt+CydW(t),\\
  -dp_1(t)&=(A^\top p_1+C^\top q_1-Q_2y+Q_1x)dt-q_1dW(t),\\
  x(0)&=x_0,\ y(0)=0,\ p_1(T)=-G_2y(T)+G_1x(T),\ p_2(T)=G_2x(T),
    \end{split}\right.
\end{equation}
has a unique solution $(x,y,p_1,q_1,p_2,q_2)$. And the Stackelberg solution $(u^*,v^*)$ can be written as
\begin{equation}\label{oo16}
  u^*=-R_1^{-1}B_1^\top p_1,\ \ v^*=-R_2^{-1}B_2^\top p_2.
\end{equation}
In what follows we see $(x,y)$ as the state and derive the feedback representation of the Stackelberg solution $(u^*,v^*)$ in terms of $(x,y)$.
We denote
\begin{equation*}
  \hat{x}=\left(
  \begin{array}{c}
  x\\
  y\\
  \end{array}\right),
  \hat{p}=\left(
  \begin{array}{c}
  p_1\\
  p_2\\
  \end{array}\right),
  \hat{q}=\left(
  \begin{array}{c}
  q_1\\
  q_2\\
  \end{array}\right),
\end{equation*}
and
\begin{equation*}
  \begin{split}
  \hat{A}=&\left(
  \begin{array}{cc}
  A&0\\
  0&A\\
  \end{array}\right),
  \hat{B}=\left(
  \begin{array}{cc}
  B_1R_1^{-1}B_1^\top&B_2R_2^{-1}B_2^\top\\
  -B_2R_2^{-1}B_2^\top&0\\
  \end{array}\right),
  \hat{C}=\left(
  \begin{array}{cc}
  C&0\\
  0&C\\
  \end{array}\right),\\
    \hat{Q}=&\left(
  \begin{array}{cc}
  Q_1&-Q_2\\
  Q_2&0\\
  \end{array}\right),
      \hat{G}=\left(
  \begin{array}{cc}
  G_1&-G_2\\
  G_2&0\\
  \end{array}\right).
  \end{split}
\end{equation*}
Then FBSDE \eqref{o16} can be rewritten as
\begin{equation}\label{o17}
  \left\{
  \begin{split}
    d\hat{x}(t)&=(\hat{A}\hat{x}(t)-\hat{B}\hat{p}(t))dt+\hat{C}\hat{x}dW(t),\\
    d\hat{p}(t)&=-(\hat{A}^\top\hat{p}+\hat{C}^\top\hat{q}+\hat{Q}\hat{x})dt+\hat{q}dW(t),\\
    \hat{x}(0)&=0, \ \hat{p}(T)=\hat{G}\hat{x}(T).
  \end{split}\right.
\end{equation}
Suppose there is a matrix-valued process $K$ such that
\begin{equation}\label{oo18}
  \hat{p}=K\hat{x},
\end{equation}
and $K$ has a stochastic differential form
\begin{equation}\label{o18}
  dK(t)=M(t)dt+L(t)dW(t).
\end{equation}
Applying It\^{o}'s formula to $K\hat{x}$, we get
\begin{equation}\label{o19}
  \begin{split}
    &M\hat{x}dt+L\hat{x}dW(t)+K(\hat{A}\hat{x}-\hat{B}K\hat{x}(t))dt+K\hat{C}\hat{x}dW(t)+L\hat{C}\hat{x}dt\\
    =&d\hat{p}(t)\\
    =&-(\hat{A}^\top K\hat{x}+\hat{C}^\top\hat{q}+\hat{Q}\hat{x})dt+\hat{q}dW(t).
  \end{split}
\end{equation}
Comparing the diffusion terms in \eqref{o19}, we have
\begin{equation}\label{o20}
  \hat{q}=L\hat{x}+K\hat{C}\hat{x}.
\end{equation}
Substituting the expression into \eqref{o19} and comparing the drift terms, we get
\begin{equation}\label{o21}
\begin{split}
  &M\hat{x}+K(\hat{A}\hat{x}-\hat{B}K\hat{x}(t))+L\hat{C}\hat{x}\\
  =&-\hat{A}^\top K\hat{x}-\hat{C}^\top(L\hat{x}+K\hat{C}\hat{x})-\hat{Q}\hat{x},
\end{split}
\end{equation}
which yields
$$M=-K\hat{A}-\hat{A}^\top K+K\hat{B}K-L\hat{C}-\hat{C}^\top L-\hat{C}^\top K\hat{C}-\hat{Q}.$$
Therefore, we get the Riccati equation
\begin{equation}\label{o22}
\left\{
\begin{split}
  dK(t)&=-(K\hat{A}+\hat{A}^\top K-K\hat{B}K+L\hat{C}+\hat{C}^\top L+\hat{C}^\top K\hat{C}+\hat{Q})dt+LdW(t),\\
  K(T)&=\hat{G}.
\end{split}\right.
\end{equation}
The difference between the above Riccati equation and the standard
one from stochastic LQ problems without control in diffusion terms
(see, e.g., \cite{Peng92}) is that $\hat{B}$, $\hat{Q}$ and
$\hat{G}$ here are not symmetric matrices. For $n=1$ and under some
appropriate assumptions on the coefficient matrices, we show in the
following proposition that Riccati equation \eqref{o22} can be
connected to a standard one through a linear transformation for
FBSDE \eqref{o17}.
\begin{prop}
Suppose that n=1 and $\alpha$ and $\beta$ are two positive constants such that
$$\frac{Q_2}{Q_1}=\frac{G_2}{G_1}=\alpha,\ \ \frac{B_2R_2^{-1}B_2^\top}{B_1R_1^{-1}B_1^\top}=\beta.$$
Then, the Riccati equation \eqref{o22} has a unique solution.
\end{prop}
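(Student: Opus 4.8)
The plan is to exploit that, with $n=1$, every coefficient is scalar-valued, and to apply a \emph{constant} invertible linear change of variables to the Hamiltonian system \eqref{o17} which, by virtue of the two ratio assumptions, decouples it into two independent scalar linear FBSDEs, each of which reduces to a standard scalar backward stochastic Riccati equation. Write $b_1:=B_1R_1^{-1}B_1^\top$, $b_2:=B_2R_2^{-1}B_2^\top$, so that $b_2=\beta b_1$, and recall $Q_2=\alpha Q_1$, $G_2=\alpha G_1$ (here $b_1,Q_1,G_1$ are uniformly positive). First I would take $\lambda_+>0>\lambda_-$ to be the two roots of $\alpha\lambda^2+\lambda-\beta=0$; these are real and distinct since the discriminant $1+4\alpha\beta$ is positive, and of opposite sign since their product $-\beta/\alpha$ is negative. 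Put $\mu_\pm:=-\alpha/(1+\lambda_\pm\alpha)$; this is well defined because $1+\lambda_+\alpha>1>0$ while $1+\lambda_-\alpha<-1<0$, equals $\beta^{-1}-\lambda_\pm^{-1}$ (the two expressions coinciding precisely because $\lambda_\pm$ solves the quadratic), and satisfies $\mu_+<0<\mu_-$. Then set $\eta_\pm:=x+\mu_\pm y$ and $\xi_\pm:=p_1+\lambda_\pm p_2$; the change of variables is carried by the matrices $M=\begin{pmatrix}1&\mu_+\\1&\mu_-\end{pmatrix}$ and $\Lambda=\begin{pmatrix}1&\lambda_+\\1&\lambda_-\end{pmatrix}$, which are nonsingular since $\lambda_+\neq\lambda_-$ and $\mu_+\neq\mu_-$.

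The next step is a routine $2\times2$ computation: applying It\^o's formula to the system \eqref{o16} (equivalently \eqref{o17}) and using the ratio identities to check that $M\hat B\Lambda^{-1}$, $\Lambda\hat QM^{-1}$ and $\Lambda\hat GM^{-1}$ are all diagonal, one finds that $(\eta_\pm,\xi_\pm)$ satisfy the decoupled scalar linear FBSDEs
\begin{equation*}
\begin{cases}
d\eta_\pm=(A\eta_\pm+c_1^\pm\xi_\pm)\,dt+C\eta_\pm\,dW,\\
-d\xi_\pm=(A\xi_\pm+C\zeta_\pm+c_2^\pm\eta_\pm)\,dt-\zeta_\pm\,dW,\\
\eta_\pm(0)\ \text{prescribed},\qquad \xi_\pm(T)=\kappa_\pm\,\eta_\pm(T),
\end{cases}
\end{equation*}
with $c_1^\pm=-b_2/\lambda_\pm$, $c_2^\pm=(1+\lambda_\pm\alpha)Q_1$ and $\kappa_\pm=(1+\lambda_\pm\alpha)G_1$. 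The $\alpha$-assumption is exactly what makes $\Lambda\hat QM^{-1}$ and $\Lambda\hat GM^{-1}$ simultaneously diagonal with the \emph{same} diagonalizer, and the $\beta$-assumption does the same for $\hat B$.

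For each sign I would then posit $\xi_\pm=k_\pm\eta_\pm$ with $dk_\pm=m_\pm\,dt+\ell_\pm\,dW$; matching terms as in the derivation of \eqref{o22} produces the scalar backward stochastic Riccati equation
\begin{equation*}
-dk_\pm=\bigl(2Ak_\pm+C^2k_\pm+2C\ell_\pm+c_2^\pm+c_1^\pm k_\pm^2\bigr)\,dt-\ell_\pm\,dW,\qquad k_\pm(T)=\kappa_\pm.
\end{equation*}
For $\lambda_+$ one has $c_1^+\le 0$, $c_2^+\ge 0$, $\kappa_+\ge 0$, so this is exactly the standard stochastic Riccati equation of a scalar LQ problem without control in the diffusion, which has a unique bounded solution $(k_+,\ell_+)$ (see \cite{Peng92}). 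For $\lambda_-$ the quadratic coefficient $c_1^-=-b_2/\lambda_-$ has the wrong sign $c_1^->0$; the crucial point is that $1+\lambda_-\alpha<0$ forces $c_2^-<0$ and $\kappa_-<0$ as well, so the substitution $\tilde k_-:=-k_-$, $\tilde\ell_-:=-\ell_-$ turns the equation into
\begin{equation*}
-d\tilde k_-=\bigl(2A\tilde k_-+C^2\tilde k_-+2C\tilde\ell_-+(-c_2^-)+(-c_1^-)\tilde k_-^2\bigr)\,dt-\tilde\ell_-\,dW,\qquad \tilde k_-(T)=-\kappa_-,
\end{equation*}
which is again a standard scalar stochastic Riccati equation, now with nonnegative data $-c_2^-\ge 0$, $-\kappa_-\ge 0$ and nonnegative quadratic coefficient $-c_1^-\ge 0$; hence it too has a unique bounded solution, and $k_-=-\tilde k_-$.

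Finally I would set $K:=\Lambda^{-1}\,\mathrm{diag}(k_+,k_-)\,M$ and $L:=\Lambda^{-1}\,\mathrm{diag}(\ell_+,\ell_-)\,M$, and verify by It\^o's formula (reversing the computation above) that $(K,L)$ solves \eqref{o22} with terminal value $\Lambda^{-1}\,\mathrm{diag}(\kappa_+,\kappa_-)\,M=\hat G$; this gives existence. For uniqueness, any bounded solution $(K,L)$ of \eqref{o22} transforms into $\mathcal K:=\Lambda KM^{-1}$ solving the same equation with the diagonalized data; its off-diagonal entries then satisfy a linear BSDE with bounded coefficients (bounded because the diagonal entries of $\mathcal K$ are bounded) and zero terminal value, hence vanish, so $\mathcal K$ is diagonal and its entries are forced to solve the scalar equations above, which are uniquely solvable. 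I expect the two main difficulties to be: guessing the decoupling transformation in the first place --- in particular recognizing that the right multipliers $\lambda_\pm$ are the roots of $\alpha\lambda^2+\lambda-\beta=0$, which relies on both ratio assumptions together --- and handling the $\lambda_-$-mode, whose Riccati equation has a wrong-sign quadratic term and is solvable only because $1+\lambda_-\alpha<0$ makes the remaining data negative, so that a sign change restores the standard structure.
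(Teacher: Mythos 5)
Your proof is correct in substance but takes a genuinely different route from the paper's. The paper applies a single constant transformation to the costate only, $\hat p=\Phi\tilde p$ with $\Phi=\left(\begin{smallmatrix}1&-2\beta\\ 2\alpha&1\end{smallmatrix}\right)$, chosen so that $\tilde B=\hat B\Phi$, $\tilde Q=\Phi^{-1}\hat Q$ and $\tilde G=\Phi^{-1}\hat G$ become \emph{symmetric} and nonnegative; this converts \eqref{o22} into a single standard $2\times 2$ stochastic Riccati equation \eqref{o25}, solvable by the cited results of Bismut, Peng and Tang, and then $K=\Phi\tilde K$, $L=\Phi\tilde L$. You instead conjugate by two different matrices on the state and costate sides so as to \emph{diagonalize} $\hat B$, $\hat Q$, $\hat G$ simultaneously, decoupling the Hamiltonian system into two scalar modes indexed by the roots of $\alpha\lambda^2+\lambda-\beta=0$; the $\lambda_+$-mode is a textbook scalar stochastic Riccati equation, and the $\lambda_-$-mode becomes one after the sign flip $k_-\mapsto -k_-$, which works precisely because $1+\lambda_-\alpha<0$ forces $c_1^-$, $c_2^-$ and $\kappa_-$ to change to the standard signs together. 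Both arguments rest on the same classical solvability results and use the ratio hypotheses in the same essential way; yours exposes the modal structure and yields a more explicit uniqueness argument (the off-diagonal entries of any transformed solution satisfy a linear BSDE with zero terminal data and hence vanish), while the paper's is shorter and needs only one invertible matrix. Two small slips that do not affect the argument: the inequality $1+\lambda_-\alpha<-1$ holds only when $\alpha\beta>2$ (all you need, and all that is true in general, is $1+\lambda_-\alpha<0$); and in the $\lambda_-$-mode the transformed quadratic coefficient satisfies $-c_1^-\le 0$, not $-c_1^-\ge 0$ as written --- nonpositivity is of course what the standard form requires, consistent with your own criterion $c_1^+\le 0$ in the $\lambda_+$-mode.
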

\begin{proof}
We make the transformation
\begin{equation}\label{o23}
  \hat{x}=\tilde{x},\ \hat{p}=\Phi\tilde{p},\ \hat{q}=\Phi\tilde{q},
\end{equation}
where
$$\Phi=\left(\begin{array}{cc}
  1&-2\beta\\
  2\alpha&1\\
  \end{array}\right).$$
Then FBSDE \eqref{o17} can be converted into the following one
\begin{equation}\label{o24}
\left\{
  \begin{split}
    d\tilde{x}(t)&=(\tilde{A}\tilde{x}(t)-\tilde{B}\tilde{p}(t))dt+\tilde{C}\tilde{x}dW(t),\\
    d\tilde{p}(t)&=-(\tilde{A}^\top\tilde{p}+\tilde{C}^\top\tilde{q}+\tilde{Q}\tilde{x})dt+\tilde{q}dW(t),\\
    \tilde{x}(0)&=0, \ \tilde{p}(T)=\tilde{G}\tilde{x}(T),
  \end{split}\right.
\end{equation}
where
$$\tilde{A}=\hat{A},\ \tilde{C}=\hat{C},$$
$$\tilde{B}=\left(
  \begin{array}{cc}
  B_1R_1^{-1}B_1^\top+2\alpha B_2R_2^{-1}B_2^\top&-B_2R_2^{-1}B_2^\top\\
  -B_2R_2^{-1}B_2^\top&2\beta B_2R_2^{-1}B_2^\top\\
  \end{array}\right),$$
 $$\tilde{Q}=\frac{1}{4\alpha\beta}\left(
  \begin{array}{cc}
  Q_1+2\beta Q_2&-Q_2\\
  -Q_2&2\alpha Q_2\\
  \end{array}\right),$$
  $$\tilde{G}=\left(
  \begin{array}{cc}
  G_1+2\beta G_2&-G_2\\
  -G_2&2\alpha G_2\\
  \end{array}\right).$$
Now the matrices $\tilde{B}$, $\tilde{Q}$ and $\tilde{G}$ are symmetric and positive definite. Suppose
$$\tilde{p}=\tilde{K}\tilde{x},$$
and $$d\tilde{K}=\tilde{K}_1dt+\tilde{L}dW(t).$$
With the same procedure to derive Riccati equation \eqref{o22}, we can get a standard Riccati equation for $(\tilde{K},\tilde{L})$
\begin{equation}\label{o25}
\left\{
\begin{split}
  d\tilde{K}(t)&=-(\tilde{K}\tilde{A}+\tilde{A}^\top \tilde{K}-\tilde{K}\tilde{B}\tilde{K}+\tilde{L}\tilde{C}+\tilde{C}^\top \tilde{L}+\tilde{C}^\top \tilde{K}\tilde{C}+\tilde{Q})dt+\tilde{L}dW(t),\\
  \tilde{K}(T)&=\tilde{G}.
\end{split}\right.
\end{equation}
According to the results in \cite{Bismut78} or \cite{Peng92}, or more general case in \cite{Tang03}, we know that Riccati equation \eqref{o25} has a unique solution $(\tilde{K},\tilde{L})$ and
$$\tilde{p}=\tilde{K}\tilde{x},\ \tilde{q}=(\tilde{L}+\tilde{K}\tilde{C})\tilde{x}.$$
 Consequently,
\begin{equation}\label{o26}
\begin{split}
\hat{p}&=\Phi\tilde{p}=\Phi\tilde{K}\tilde{x}=\Phi\tilde{K}\hat{x},\\
\hat{q}&=\Phi\tilde{q}=\Phi(\tilde{L}+\tilde{K}\tilde{C})\tilde{x}=\Phi(\tilde{L}+\tilde{K}\tilde{C})\hat{x}.
\end{split}
\end{equation}
Comparing \eqref{o26} with \eqref{oo18} and \eqref{o20}, we finally get
$$K=\Phi\tilde{K},\ L=\Phi\tilde{L}.$$
From \eqref{oo16} we obtain that the Stackelberg solution $(u^*,v^*)$ has a feedback representation in terms of the state $(x,y)$.
\end{proof}
\subsection{The closed-loop case}
 As pointed out in the deterministic case \cite{PapavassiCruz79}, the relative independence
of the leader's strategy $u$ and its derivative $\frac{\partial
u}{\partial x}$ in a closed-loop Stackelberg game makes the leader
so powerful that his Hamiltonian $H$ is likely to achieve $-\infty$
if there is no restriction on the derivative $\frac{\partial
u}{\partial x}$. One way to restrict the leader's strength is to add
a penalty term $\frac{\partial u}{\partial x}$ in his cost
functional in order that $H$ is convex with respect to
$(u,\frac{\partial u}{\partial x})$. The other way is to impose a
prior bounds on $\frac{\partial u}{\partial x}$ to retain $H$
finite. In this section we will adopt the latter way to assume
$\frac{\partial u}{\partial x}$ to be bounded since it will appear
as the coefficient of the unknowns in adjoint equations and the
boundedness of the derivative $\frac{\partial u}{\partial x}$
implies the well-posedness of the leader's problem when affine
strategies are adopted. For simplicity, we consider one-dimensional
linear quadratic game, with the state equation and cost functionals
of the two players as follows
\begin{equation*}
\left\{
\begin{split}
  dx(t)&=[Ax(t)+B_1u(t)+B_2v(t)]dt+Cx(t)dW(t),\\
   x(0)&=x_0,
\end{split}\right.
\end{equation*}
and
\begin{align*}
J_1 &= \frac{1}{2}E [\int_{0}^T(Q_1x^2(t)+R_1u^{2}(t))dt+G_1x^2(T)],\\
J_2 &= \frac{1}{2}E [\int_{0}^T(Q_2x^2(t)+R_2v^{2}(t))dt+G_2x^2(T)].
\end{align*}
The admissible strategy spaces from which the leader and the follower choose their strategies are given by
\begin{equation*}
\begin{split}
  \mathcal {U}&:=\{u|u:\Omega\times[0,T]\times\mathbb{R}\rightarrow U\ \textrm{is}\ \mathcal {F}_t\textrm{-adapted for any}\ x\in\mathbb{R},\ u(t,x)\ \textrm{is continuously}\\
   &\ \ \ \ \ \ \textrm{differentible in}\ x\ \textrm{for any}\ (\omega,t)\in\Omega\times[0,T],\ \textrm{and the derivative}\ |\frac{\partial u}{\partial x}|\leq K\\
   &\ \ \ \ \ \ \textrm{for some postive constant $K$}\},\\
  \mathcal {V}&:=\{v|v:\Omega\times[0,T]\times\mathbb{R}^n\rightarrow V\ \textrm{is }\ \mathcal {F}_t\textrm{-adapted for any}\ x\in\mathbb{R}^n\}.
\end{split}
\end{equation*}
Suppose for leader's each strategy $u\in\mathcal {U}$, the follower has a unique optimal response $v^*\in\mathcal {V}$. From \eqref{g3} we know $$v^*=-R_2^{-1}B_2p_2,$$
with $p_2$ satisfying
\begin{equation*}
\left\{
  \begin{split}
    dp_2(t)&=-[(A+B_1\frac{\partial u}{\partial x})p_2+Cq_2+Q_2x]dt+q_2dW(t),\\
    p_2(T)&=G_2x(T).
  \end{split}\right.
\end{equation*}
Therefore the leader's problem is
\begin{equation}\label{lq2}
  \min_{u\in\mathcal{U}}J_1= \frac{1}{2}E [\int_{0}^T(Q_1x^2(t)+R_1u^{2}(t))dt+G_1x^2(T)]
\end{equation}
subject to
\begin{equation}\label{lq3}
  \left\{
  \begin{split}
    dx(t)&=[Ax(t)+B_1u(t,x(t))-R_2^{-1}B^2_2p_2(t)]dt+Cx(t)dW(t),\\
    dp_2(t)&=-[(A+B_1\frac{\partial u}{\partial x})p_2+Cq_2+Q_2x]dt+q_2dW(t),\\
    x(0)&=x_0,\ p_2(T)=G_2x(T).
  \end{split}\right.
\end{equation}
Suppose that for every $u(t,x)\in\mathcal {U}$, there is a unique
solution $(x,p_2,q_2)$ to FBSDE \eqref{lq3}. According to the
discussions in section \ref{sgs}, we know that the leader will lose
nothing if he chooses his strategy among affine functions
$$u(t,x)=u_2(t)x+u_1(t),$$
with $u_1$ and $u_2$ being adapted processes and $|u_2|\leq K$. Then the leader's equivalent problem can be written as
\begin{equation}\label{lq4}
  \min_{u_1,u_2}\   J_1= \frac{1}{2}E \{\int_{0}^T[Q_1x^2(t)+R_1(u_2(t)x(t)+u_1(t))^2]dt+G_1x^2(T)\}
\end{equation}
subject to
\begin{equation}\label{lq5}
  \left\{
  \begin{split}
  dx(t)&=[(A+B_1u_2)x+B_1u_1-R_2^{-1}B^2_2p_2]dt+Cx(t)dW(t),\\
  dp_2(t)&=-[(A+B_1u_2)p_2+Cq_2+Q_2x]dt+q_2dW(t),\\
  x(0)&=x_0,\ p_2(T)=G_2x(T).
  \end{split}\right.
\end{equation}
For every pair $(u_1,u_2)$, the monotonicity condition guarantees
the existence and uniqueness of the solution to \eqref{lq5}.
Therefore, the leader's problem with strategies restricted being of
affine form is well-posed. In what follows we use the maximum
principle to get the Hamiltonian system and related Riccati equation
for leader's problem \eqref{lq4}-\eqref{lq5}. Denote
\begin{equation}\label{lq6}
  \begin{split}
    &H_1(t,u_1,u_2,x,y,p_1,p_2,q_1,q_2)\\
    =&p_1[(A+B_1u_2)x+B_1u_1-R_2^{-1}B^2_2p_2]+Cxq_1\\
    &-y[(A+B_1u_2)p_2+Cq_2+Q_2x]+\frac{1}{2}[Q_1x^2+R_1(u_2x+u_1)^2].
  \end{split}
\end{equation}
To obtain $(u_1^*,u_2^*)$ that minimizes $H_1(t,u_1,u_2,x,y,p_1,p_2,q_1,q_2)$, we first fix $u_2$ and minimize $H_1$ with respect to $u_1$. By computation,
\begin{equation}\label{lq7}
  u_1^*=-u_2x-R_1^{-1}B_1p_1.
\end{equation}
Substituting \eqref{lq7} into the expression \eqref{lq6} of $H$, we can see the only term containing $u_2$ is
\begin{equation}\label{lq9}
  -B_1yp_2u_2.
\end{equation}
Therefore, the optimal $u_2^*$ is
\begin{equation}\label{lq10}
  u_2^*=\left\{
  \begin{array}{ccc}
  -K, &\ \mbox{if $\Delta>0$},\\
   K, &\ \mbox{if $\Delta<0$},\\
   \mbox{undefined,} &\ \mbox{if $\Delta=0$},
  \end{array}\right.
\end{equation}
where $$\Delta:=-B_1yp_2.$$
To find a candidate of optimal pair $(u_1^*,u_2^*)$, we set
\begin{equation*}
\begin{split}
  u_2^*:=&bang(K,-K;\Delta)\\
  :=&sgn(B_1yp_2)K\\
  =&sgn(y)sgn(B_1p_2)K\\
  =&sgn(p_2)sgn(B_1y)K,
\end{split}
\end{equation*}
where $sgn$ is the sign function defined by
\begin{equation*}
  sgn(x)=\left\{
  \begin{array}{ccc}
  1&\ \mbox{if $x>0$,}\\
  0&\ \mbox{if $x=0$,}\\
  -1&\ \mbox{if $x<0$.}
  \end{array}\right.
\end{equation*}
From \eqref{lq7} we get
\begin{equation}\label{lq11}
  u_1^*=-bang(K,-K;\Delta)x-R_1^{-1}B_1p_1.
\end{equation}
If $(u_1^*,u_2^*)\in\mathcal {U}\times\mathcal {V}$ is a solution to
the leader's problem \eqref{lq4}-\eqref{lq5}, then the maximum
principle yields that there exist adapted processes $y$, $p_1$, and
$q_1$ such that
\begin{equation}\label{lq11}
\left\{
  \begin{split}
  dx(t)&=[(A+B_1u_2^*)x+B_1u_1^*-R_2^{-1}B^2_2p_2]dt+Cx(t)dW(t),\\
  dy(t)&=[(A+B_1u_2^*)y+R_2^{-1}B^2_2p_1]dt+CydW(t),\\
  dp_1(t)&=-[(A+B_1u_2^*)p_1+Cq_1-Q_2y+Q_1x+R_1u_2^*(u_2^*x+u_1^*)]dt+q_1dW(t),\\
  dp_2(t)&=-[(A+B_1u_2^*)p_2+Cq_2+Q_2x]dt+q_2dW(t),\\
  x(0)&=x_0,\ y(0)=0,\ p_1(T)=-G_2y(T)+G_1x(T),\ p_2(T)=G_2x(T),\\
  u_1^*&=-bang(K,-K;\Delta)x-R_1^{-1}B_1p_1,\ u_2^*:=bang(K,-K;\Delta).
  \end{split}\right.
\end{equation}
Like the open-loop case, we proceed to express the optimal strategy
$(u_1^*,u_2^*)$ in a non-anticipating way by means of the state
feedback representation. Substituting the expressions of $u_1^*$ and
$u_2^*$ into the FBSDE in \eqref{lq11}, we get
\begin{equation}\label{glq12}
\left\{
  \begin{split}
  dx(t)&=[Ax-R_1^{-1}B_1^2p_1-R_2^{-1}B^2_2p_2]dt+Cx(t)dW(t),\\
  dy(t)&=[(A+B_1bang(K,-K;\Delta))y+R_2^{-1}B^2_2p_1]dt+CydW(t)\\
       &=[Ay+sgn(p_2)K|B_1y|+R_2^{-1}B^2_2p_1]dt+CydW(t),\\
  dp_1(t)&=-[Ap_1+Cq_1-Q_2y+Q_1x]dt+q_1dW(t),\\
  dp_2(t)&=-[(A+B_1bang(K,-K;\Delta))p_2+Cq_2+Q_2x]dt+q_2dW(t)\\
         &=-[Ap_2+sgn(y)K|B_1p_2|+Cq_2+Q_2x]dt+q_2dW(t),\\
  x(0)&=x_0,\ y(0)=0,\ p_1(T)=-G_2y(T)+G_1x(T),\ p_2(T)=G_2x(T).
  \end{split}\right.
\end{equation}
In contrast to FBSDE \eqref{o16} in the open-loop case, the presence
of the additional nonlinear term $bang(K,-K;\Delta)$ in FBSDE
\eqref{glq12} makes it a nonlinear system. Moreover, the Lipschitz
continuity assumption usually made for the coefficients in the
literature does not hold here. Therefore, the existence and
uniqueness of the solution to \eqref{glq12}, as far as we know, is
still not available. On the other hand, if we still view $(x,y)$ as
the ``state'' and represent $(p_1,p_2)$ in terms of $(x,y)$ as in
the open-loop case, we can not derive an exogenous Riccati equation.
Instead, we only see $x$ as the state and suppose
\begin{equation}\label{lq12}
  y(t)=\xi(t)x(t),\ \ p_1(t)=\eta(t)x(t),\ \ p_2(t)=\zeta(t)x(t),
\end{equation}
and
\begin{equation}\label{lq13}
\begin{split}
d\xi(t)&=\xi_1(t)dt+\xi_2(t)dW(t),\\
d\eta(t)&=\eta_1(t)dt+\eta_2(t)dW(t),\\
d\zeta(t)&=\zeta_1(t)dt+\zeta_2(t)dW(t).
\end{split}
\end{equation}
By It\^{o}'s formula and in view of \eqref{lq12}
\begin{equation}\label{lq14}
\begin{split}
dy(t)=&\xi(t)dx(t)+x(t)d\xi(t)+Cx(t)\xi_2(t)dt\\
     =&\xi(t)[Ax-R_1^{-1}B_1^2p_1-R_2^{-1}B^2_2p_2]dt+C\xi(t)x(t)dW(t)\\
      &+\xi_1(t)x(t)dt+\xi_2(t)x(t)dW(t)+C\xi_2(t)x(t)dt\\
     =&\{[A-R_1^{-1}B_1^2\eta(t)-R_2^{-1}B^2_2\zeta(t)]\xi(t)\\
      &+\xi_1(t)+C\xi_2(t)\}x(t)dt+[C\xi(t)+\xi_2(t)]x(t)dW(t).
\end{split}
\end{equation}

On the other hand,
\begin{equation}\label{lq15}
  \begin{split}
    dy(t)=&[(A+B_1bang(K,-K;\Delta))y+R_2^{-1}B^2_2p_1]dt+CydW(t)\\
  =&[(A+B_1bang(K,-K;\tilde{\Delta}))\xi(t)+R_2^{-1}B^2_2\eta(t)]x(t)dt+C\xi(t)x(t)dW(t),
  \end{split}
\end{equation}
where
$$\tilde{\Delta}:=-B_1\xi(t)\zeta(t).$$
Comparing \eqref{lq14} and \eqref{lq15}, we have
\begin{equation*}
  \begin{split}
\xi_2(t)=&0,\\
\xi_1(t)=&[R_1^{-1}B_1^2\eta(t)+R_2^{-1}B^2_2\zeta(t)+B_1bang(K,-K;\tilde{\Delta})]\xi(t)+R_2^{-1}B^2_2\eta(t).
  \end{split}
\end{equation*}
With the same procedure, we can get
\begin{equation*}
\left\{
  \begin{split}
\eta_1(t)=&[R_1^{-1}B_1^2\eta(t)+R_2^{-1}B^2_2\zeta(t)-2A-C^2]\eta(t)+Q_2\xi(t)-2C\eta_2(t)-Q_1,\\
\zeta_1(t)=&[R_1^{-1}B_1^2\eta(t)+R_2^{-1}B^2_2\zeta(t)-2A-C^2-B_1bang(K,-K;\tilde{\Delta})]\zeta(t)-2C\zeta_2(t)-Q_2.
 \end{split}\right.
\end{equation*}
Therefore, we derive the related Riccati equation for problem \eqref{lq4}-\eqref{lq5}
\begin{equation*}
\left\{
  \begin{split}
d\xi(t)=&\{[R_1^{-1}B_1^2\eta(t)+R_2^{-1}B^2_2\zeta(t)+B_1bang(K,-K;\tilde{\Delta})]\xi(t)+R_2^{-1}B^2_2\eta(t)\}dt\\
       =&\{[R_1^{-1}B_1^2\eta(t)+R_2^{-1}B^2_2\zeta(t)]\xi(t)+sgn(\zeta(t))|B_1\xi(t)|+R_2^{-1}B^2_2\eta(t)\}dt,\\
d\eta(t)=&\{[R_1^{-1}B_1^2\eta(t)+R_2^{-1}B^2_2\zeta(t)-2A-C^2]\eta(t)+Q_2\xi(t)-2C\eta_2(t)\\
         &-Q_1\}dt+\eta_2(t)dW(t),\\
d\zeta(t)=&\{[R_1^{-1}B_1^2\eta(t)+R_2^{-1}B^2_2\zeta(t)-2A-C^2-B_1bang(K,-K;\tilde{\Delta})]\zeta(t)\\
          &-2C\zeta_2(t)-Q_2\}dt+\zeta_2(t)dW(t)\\
         =&\{[R_1^{-1}B_1^2\eta(t)+R_2^{-1}B^2_2\zeta(t)-2A-C^2]\zeta(t)-sgn(\xi(t))|B_1\zeta(t)|\\
          &-2C\zeta_2(t)-Q_2\}dt+\zeta_2(t)dW(t),\\
  \xi(0)=&0,\ \eta(T)=-G_2\xi(T)+G_1,\ \zeta(T)=G_2.
  \end{split}\right.
\end{equation*}
Suppose $(\xi,\eta,\zeta,\eta_2,\zeta_2)$ is a solution to the above FBSDE and $x^*$ solves the linear SDE
\begin{equation*}
\left\{
\begin{split}
 dx(t)&=[A-R_1^{-1}B_1^2\eta-R_2^{-1}B^2_2\zeta]x(t)dt+Cx(t)dW(t),\\
 x(0)&=x_0.
\end{split}\right.
\end{equation*}
Then we can use It\^{o}'s formula to verify that
\begin{equation*}
  \begin{split}
    y(t):=&\xi(t)x^*(t),\ p_1(t):=\eta(t)x^*(t),\ p_2(t):=\zeta(t)x^*(t),\\
    q_1(t):=&[C\eta(t)+\eta_2(t)]x^*(t),\ q_2(t):=[C\zeta(t)+\zeta_2(t)]x^*(t),
  \end{split}
\end{equation*}
together with $x^*$ solve the leader's Hamiltonian system \eqref{glq12}. Therefore,
$$u(t,x)=bang(K,-K;\tilde{\Delta})x-bang(K,-K;\tilde{\Delta})x^*(t)-R_1^{-1}B_1\eta(t)x^*(t)$$
with $\tilde{\Delta}=-B_1\xi(t)\zeta(t)$ is a candidate of the leader's optimal strategy.

\end{document}